\providecommand{\noopsort}[1]{}
\newtheorem{thm}{Theorem}
\newtheorem{Lem}{Lemma}
\newtheorem{cor}{Corollary}
\newtheorem*{cor*}{Corollary}
\newenvironment{manualCor}[1]{%
  \manualCorN
}{\endmanualCorN}
\newenvironment{manualThm}[1]{%
  \manualThmN
}{\endmanualThmN}
\title{An explicit mean-value estimate for the PNT in intervals}
\author{\textsc{Michaela Cully-Hugill\footnote{m.cully-hugill@adfa.edu.au}} \\ School of Science \\ UNSW Canberra \\ Australia ACT 2612 \\   \ \\\textsc{Adrian W. Dudek\footnote{a.dudek@uq.edu.au}} \\ School of Mathematics and Physics \\ University of Queensland \\ Australia QLD 4072}
\begin{document}

\maketitle
\renewcommand{\bibname}{References}

\begin{abstract}
    This paper gives an explicit version of Selberg's mean-value estimate for the prime number theorem in intervals, assuming the Riemann hypothesis \cite{Selberg_43}. Two applications are given to short-interval results for primes and for Goldbach numbers. Under the Riemann hypothesis, we show there exists a prime in $(y,y+32277\log^2 y]$ for at least half the $y\in[x,2x]$ for all $x\geq 2$, and at least one Goldbach number in $(x,x+9696 \log^2 x]$ for all $x\geq 2$.
\end{abstract}

\section{Introduction}

Selberg's 1943 paper \cite{Selberg_43} features conditional and unconditional estimates for the asymptotic behaviour of the prime number theorem in short intervals $(x,x+h]$ with $h=o(x)$. They are reached via the relationship between Chebyshev's prime counting functions, $\theta(x)$ and $\psi(x)$, and the Riemann zeta-function, $\zeta(s)$. A notable waypoint in Selberg's method is an estimate for 
\begin{align*}
    J(x,\delta) = \int_1^{x} |\theta(y+\delta y)- \theta(y) - \delta y|^2 dy
\end{align*}
for $\delta\in (0,1]$, to gauge the mean value of $\theta(x)$ in short intervals. There has been much interest in this integral since Selberg's paper, for its connection to the prime number theorem and prime gaps, and for estimates on the zeros of $\zeta(s)$ and Montgomery's pair-correlation function. 

Assuming the Riemann hypothesis (RH), the best estimate for $J(x,\delta)$ is
\begin{align}\label{Selberg-MV}
    J(x,\delta) \ll \delta x^2\log^2 x,
\end{align}
for all $\delta\in [1/x,1]$, from Selberg \cite{Selberg_43}\footnote{This estimate is given in the second display equation on page 172 of \cite{Selberg_43}.}. Saffari and Vaughan gave a similar result in Lemma 5 of \cite{S_V_77}, but used an averaging technique with the Riemann--von Mangoldt explicit formula. Unconditionally, one of the best results for $J(x,\delta)$ to date is from Zaccagnini \cite{Zacc_98} of $J(x,\delta) \ll x^3\delta^2$ for $\delta\in [x^{-5/6-\epsilon(x)},1]$ with $\epsilon(x)\rightarrow 0$ as $x\rightarrow \infty$. 

Selberg's result (\ref{Selberg-MV}) is actually deduced from an estimate for a similar integral over 1 to $\delta^{-1}$: see equation (13) in \cite{Selberg_43}. This integral has itself been separately studied, in part because it allows a better illustration of the relationship between the size of the interval and the asymptotic behaviour of the integral. For more details on this see, e.g., the introduction of \cite{C_C_C_M_22}. Another useful reference is Zaccagnini's review paper \cite{Zaccagnini_16}, which gives a survey of the literature surrounding $J(x,\delta)$. Also see Goldston et al. \cite{G_G_M_01} for a version of Selberg's proof of (\ref{Selberg-MV}) for $\psi(x)$.

The primary goal of this paper is to prove the following explicit version of (\ref{Selberg-MV}).
\begin{thm}\label{mean-value-thm}
    Assuming RH, for all $x\geq 10^8$ and any $\delta\in (0,10^{-8}]$ we have
    $$\int_1^x \left| \theta(y+\delta y)- \theta(y) - \delta y \right|^2 dy < 202 \delta x^2 \log^2x.$$
\end{thm}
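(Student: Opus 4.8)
The plan is to make Selberg's method (in the form used by Saffari--Vaughan and by Goldston et al.) completely explicit. First I would reduce to $\psi$. Since $\psi(t)-\theta(t)=\sum_{k\ge 2}\theta(t^{1/k})\ll\sqrt t$ with explicit constants, the nonnegative quantity $R(y):=(\psi-\theta)((1+\delta)y)-(\psi-\theta)(y)$ satisfies $R(y)\le(\psi-\theta)(2y)\ll\sqrt y$ and, by reversing the order of summation (each prime power $p^k\le 2x$ with $k\ge 2$ is counted over a $y$-interval of length $\le\delta p^k$), $\int_1^x R(y)\,dy\ll\delta x^{3/2}$; hence $\int_1^x R(y)^2\,dy\le\|R\|_\infty\int_1^x R\ll\delta x^2$. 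Writing $\theta((1+\delta)y)-\theta(y)-\delta y=[\psi((1+\delta)y)-\psi(y)-\delta y]-R(y)$ and using $|a-b|^2\le(1+\eta)a^2+(1+\eta^{-1})b^2$ for a small $\eta>0$, it suffices to bound $\int_1^x|\psi((1+\delta)y)-\psi(y)-\delta y|^2\,dy$ by a constant a little below $202$ times $\delta x^2\log^2 x$, with room left for the $R$-term, which is $O(\delta x^2)$ and hence negligible against $\delta x^2\log^2 x$.

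Next I would dispose of the range $0<\delta\le 1/x$ elementarily: there every interval $(y,(1+\delta)y]$ with $y\le x$ has length at most $1$, hence contains at most one prime power, so $|\psi((1+\delta)y)-\psi(y)|\le\log(2x)$, and this is nonzero only on $\bigcup_{p^k\le(1+\delta)x}[\,p^k/(1+\delta),p^k)$, a set of measure $\le\frac{\delta}{1+\delta}\sum_{p^k\le 2x}p^k\ll\delta x^2/\log x$. Splitting the integral accordingly gives $\ll\delta x^2\log x+\delta^2x^3\ll\delta x^2\log x$ (using $\delta x\le 1$), comfortably below the target; in particular the theorem is immediate when $x=10^8$, and from now on $1/x<\delta\le 10^{-8}$, so $\log(1/\delta)<\log x$.

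For the main range I would run an explicit Perron/Plancherel argument (the ``averaging technique'' of Saffari--Vaughan). By Perron's formula and a shift of contour to $\operatorname{Re}(s)=\tfrac12+\varepsilon$ -- legitimate under RH, and crossing only the pole at $s=1$, which contributes exactly $\delta y$ -- one gets
\[
\psi((1+\delta)y)-\psi(y)-\delta y=\frac{y^{1/2+\varepsilon}}{2\pi}\int_{-\infty}^{\infty}H(t)\,y^{it}\,dt,
\]
where $H(t)=-\tfrac{\zeta'}{\zeta}\!\left(\tfrac12+\varepsilon+it\right)\cdot\dfrac{(1+\delta)^{1/2+\varepsilon+it}-1}{\tfrac12+\varepsilon+it}\in L^2(\mathbb R)$. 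Writing $y=e^u$, bounding $e^{(2+2\varepsilon)u}\le x^{2+2\varepsilon}$ on $[1,x]$, extending the $u$-integral to $\mathbb R$, and applying Parseval yields
\[
\int_1^x|\psi((1+\delta)y)-\psi(y)-\delta y|^2\,dy\ \le\ \frac{x^{2+2\varepsilon}}{2\pi}\int_{-\infty}^{\infty}|H(t)|^2\,dt .
\]
(Equivalently: truncate the explicit formula at height $T\asymp\sqrt{x/\delta}\,\log x$, estimate the error term, and bound the second moment of $\sum_{|\gamma|\le T}((1+\delta)^\rho-1)\rho^{-1}y^\rho$; the arithmetic is the same.) Since $|(1+\delta)^{1/2+\varepsilon+it}-1|^2\ll\delta^2+\min(1,t^2\log^2(1+\delta))$, the factor multiplying $|\zeta'/\zeta|^2$ in $|H(t)|^2$ is $\ll\min(\delta^2,t^{-2})$ for $|t|\ge 1$, so everything reduces to the mean square $\int_{-T}^{T}\bigl|\tfrac{\zeta'}{\zeta}(\tfrac12+\varepsilon+it)\bigr|^2\,dt$ on a line just right of the critical line.

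The crux is to prove, explicitly and under RH, that this mean square is $\ll T\log T/\varepsilon+T\log^2 T$: the partial-fraction expansion of $\zeta'/\zeta$ localises its large values near the ordinates $\gamma$, the ``diagonal'' is handled by $N(T)\ll T\log T$, and the ``off-diagonal'' (clusters of nearby zeros) by $N(t+h)-N(t)\ll h\log t+1$, the point being that with $h=\varepsilon$ the number of close pairs is only $\ll T\log T$, so no extra logarithm compounds. Feeding this back, choosing $\varepsilon=1/\log x$ (so $x^{2\varepsilon}=e^2$), and using $\log(1/\delta)<\log x$ and $\log(1+\delta)\le\delta$, one gets $\int_1^x|\psi((1+\delta)y)-\psi(y)-\delta y|^2\,dy\ll\delta x^2\log^2 x$; combining with the $\theta\!\to\!\psi$ reduction and accounting honestly for every constant (the choices of $\eta$ and $\varepsilon$, the explicit constants in the zero-counting bounds and in the $\zeta'/\zeta$ mean square, and the crude step $e^{(2+2\varepsilon)u}\le x^{2+2\varepsilon}$) produces the stated bound with the non-tight explicit constant $202$. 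The main obstacle is exactly this second-moment estimate for $\zeta'/\zeta$ (equivalently, the off-diagonal terms in the sum over zeros): estimating it by absolute values costs an extra logarithm, so obtaining the clean $\log^2 x$ requires the Plancherel/averaging step together with sharp explicit inputs on the distribution of the zeros.
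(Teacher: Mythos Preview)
Your outline is sound and shares the Perron--Plancherel skeleton with the paper, but the implementation diverges in two places worth noting. First, you reduce $\theta$ to $\psi$ at the outset and pay for the prime powers with an $R(y)$ estimate; the paper instead keeps $\theta$ throughout by writing $\sum_p(\log p)p^{-s}=-\zeta'(s)/\zeta(s)-g(s)$ and bounding the smooth correction $g(s)\ll (\sigma-\tfrac12)^{-1}$ explicitly on the vertical line, which is slightly cleaner for constants. Second, and more substantively, for the mean square of $\zeta'/\zeta$ on $\sigma=\tfrac12+\varepsilon$ you propose partial fractions plus short-interval zero counting $N(t+h)-N(t)\ll h\log t+1$, whereas the paper proves this (its Lemma~4) by Selberg's original device: approximate $-\zeta'/\zeta(s)$ by the mollified Dirichlet polynomial $\sum_{n<x^2}\Lambda_x(n)n^{-s}$ with a self-improving explicit error (the paper's Lemma~3, obtained from Selberg's identity together with an explicit partial-fraction bound for $\zeta'/\zeta$), and then apply the elementary diagonal/off-diagonal mean-value estimate to that polynomial. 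This produces $\int_0^T|\zeta'/\zeta(\sigma+it)|^2\,dt\le A_4\,T/(\sigma-\tfrac12)^2$ with $A_4\approx 0.535$ and no pair-correlation-type input, after which the paper takes $\sigma=\tfrac12+\alpha/\log x$ with $\alpha\approx 2.08$ (your $\varepsilon=1/\log x$ is essentially the same choice). Your zero-counting route can be pushed through, but your off-diagonal sketch (``close pairs only $\ll T\log T$'') is the delicate step: the full off-diagonal involves pairs at all separations, and extracting good explicit constants there is noticeably harder than via the Dirichlet-polynomial approximation. In short, both routes reach the $\delta x^2\log^2 x$ shape; the paper's Selberg-mollifier method is what actually delivers the explicit $202$.
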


Estimates for $J(x, \delta)$ can be used to comment on the measure of intervals that contain primes. It is usually said that an estimate holds for `almost all' $y\in[x,2x]$ if the exceptional set has measure $o(x)$ (see footnote 4 on page 161 of \cite{Selberg_43} for Selberg's definition). Under RH, Selberg's estimate implies that almost all intervals $[x,x+h]$ contain a prime for any positive increasing function $h=o(x)$ with $h/\log^2 x \rightarrow\infty$ (stated in \cite[Cor.~2]{Wolke_1983}). For comparison, the best unconditional result is from Jia \cite{Jia_96-primes}, of primes in almost all intervals of the form $[x,x+x^{\frac{1}{20}+\epsilon}]$, for any $\epsilon>0$. 

With an explicit estimate for $J(x,\delta)$, we can explicitly determine the measure of the number of intervals in some range which contain primes. Moreover, Theorem \ref{mean-value-thm} allows us to do so for any interval wider than $O(\log y)$. To demonstrate how this can be done, we prove the following corollary in Section \ref{proof-corollary}.
\begin{cor}\label{some-intervals}
    Assuming RH, the set of $y\in [x,2x]$ for which there is at least one prime in $(y, y+ 32277 \log^2 y]$ has a measure of at least $x/2$ for all $x\geq 2$.
\end{cor}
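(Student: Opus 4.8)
The plan is a dichotomy: for large $x$ we extract the conclusion from Theorem \ref{mean-value-thm} by an elementary covering argument, and for bounded $x$ we fall back on the computationally verified bounds for maximal prime gaps.

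For the main range, fix a large $x$ and set $\delta = 32277\log^2 x/(2x)$, which is $\le 10^{-8}$ once $x$ is large enough and satisfies $2x\ge 10^8$, so that Theorem \ref{mean-value-thm} applies with argument $2x$ in place of $x$. Let $E\subseteq[x,2x]$ be the set of $y$ such that $(y,y+\delta y]$ contains no prime. For $y\in E$ we have $\theta(y+\delta y)-\theta(y)=0$, hence $\bigl|\theta(y+\delta y)-\theta(y)-\delta y\bigr|=\delta y\ge \delta x$, and therefore
\[
|E|\,\delta^2 x^2 \;\le\; \int_E \bigl|\theta(y+\delta y)-\theta(y)-\delta y\bigr|^2\,dy \;\le\; \int_1^{2x}\bigl|\theta(y+\delta y)-\theta(y)-\delta y\bigr|^2\,dy \;<\; 202\,\delta\,(2x)^2\log^2(2x).
\]
This gives $|E|<808\log^2(2x)/\delta = \bigl(1616/32277\bigr)\,x\,(\log(2x)/\log x)^2$, and since $\log(2x)/\log x$ is decreasing in $x$ a short computation shows the right-hand side is $<x/2$ for every $x\ge 2$ (the factor $32277$ is chosen so that this holds all the way down to $x=2$, where $(\log(2x)/\log x)^2=4$). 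Finally, for every $y\in[x,2x]$ we have $\delta y\le 2\delta x = 32277\log^2 x\le 32277\log^2 y$, so any $y\notin E$ has a prime in $(y,y+\delta y]\subseteq(y,y+32277\log^2 y]$; hence the set of good $y$ has measure $\ge x-|E|>x/2$.

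For the remaining range --- $x$ below the point where $\delta\le 10^{-8}$ can be satisfied (which is of size roughly $10^{15}$, since the covering forces $\delta$ to be of order $\log^2 x/x$ if it is to produce intervals of length $\asymp\log^2 y$) --- the mean-value estimate is not needed at all: every $y\in[x,2x]$ then lies inside the range in which all gaps between consecutive primes have been computed (up to $4\times10^{18}$), where the largest such gap is under $1500$, which is smaller than $32277\log^2 y$ for every $y\ge 2$. So for these $x$ the good set is all of $[x,2x]$. Since the analytic range and the computational range overlap, the two cases together cover all $x\ge 2$.

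The main obstacle is the hypothesis $\delta\le 10^{-8}$ of Theorem \ref{mean-value-thm}: because the covering forces $\delta\asymp\log^2 x/x$, this hypothesis is unusable until $x$ is very large, so the analytic argument cannot by itself reach $x=2$, and one is obliged to splice in a computational prime-gap input for the bounded range and to check that the two regimes meet. The only genuine bookkeeping is then choosing the constant $32277$ so that it simultaneously beats the factor $1616(\log(2x)/\log x)^2$ on the analytic side and exceeds the verified maximal prime gap on the computational side.
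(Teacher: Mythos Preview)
Your argument is correct, and it is in fact more direct than the paper's. The paper does not apply Theorem~\ref{mean-value-thm} as stated; instead it returns to the sharper intermediate estimate~(\ref{delta_ver}),
\[
\int_1^{\delta^{-b}}\left|\frac{G(y,\delta)}{y}\right|^2dy < e^{2\alpha(b-1)}A_7\,\delta\log^2(1/\delta),
\]
re-optimises the parameters $\alpha$, $b$, and $T_0$ in Lemma~\ref{Lem4} (obtaining $A_7\approx 236.72$, $\alpha\approx 1.5295$, $b\approx 1.7004$), and runs a slightly more elaborate contradiction argument bounding $|G(y,\delta)|$ pointwise by $\sqrt{B}\log^2 y$ before deducing the existence of primes. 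The constant $32277$ emerges from that optimisation, not from your inequality $(1616/\lambda)(\log(2x)/\log x)^2<1/2$ at $x=2$; your parenthetical remark about the origin of $32277$ is therefore a misreading, though harmless for the proof. What your route buys is simplicity --- and, interestingly, strength: since the analytic argument is only needed once $\delta\le 10^{-8}$, i.e.\ for $x\gtrsim 2\times10^{15}$, where $(\log(2x)/\log x)^2<1.05$, your inequality $|E|<(1616/\lambda)\cdot 1.05\,x<x/2$ would already hold for any $\lambda>3400$ or so. So the paper's detour through~(\ref{delta_ver}) appears to cost a factor of nearly $10$ in the final constant relative to the straightforward application of Theorem~\ref{mean-value-thm} that you give.
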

Similar statements to Corollary \ref{some-intervals} can be made with other short intervals. We chose this particular interval to make a comparison with a conjecture\footnote{Although the basis for this conjecture has been called into question, it is still considered likely that Cram{\'e}r's conjecture is true for powers of $\log p_n$ above 2. See \cite[pg.~23]{Granville_95} and \cite{Pintz_07}.} of Cram{\'e}r \cite{Cramer_36}, that the upper bound on gaps between consecutive primes, $p_{n+1}-p_n$, should be $O(\log^2 p_n)$. This would predict Corollary \ref{some-intervals} to be true for all $y\in[x,2x]$. Another comparison can be made with the result of Goldston, Pintz, and Y\i ld\i r\i m \cite[Thm.~1]{G_P_Y_11}, that for any fixed $\eta>0$ there is a positive proportion of $y\in[x,2x]$ for which $(y,y+\eta\log y]$ contains a prime as $x\rightarrow\infty$.

%Bazzanella and Perelli \cite{B_P_2000} developed the theory on bounding the exceptional set of primes in short intervals, and, assuming a stronger estimate than (\ref{Selberg-MV}), give estimates in \cite[Cor.~3]{B_P_2000} for the exceptional set for primes between intervals of length $\sqrt{y}(\log y)^a$ with $a=0, 1/2, 1$. Also, just assuming RH, Bazzanella \cite{Bazzanella_2000} has shown that the exceptional set of primes between consecutive squares has measure $O(f(N)\log^2 N)$ for any $f(x)\rightarrow \infty$.

% Selberg's estimate is deduced from equation (13) of \cite{Selberg_43}, for $0<h\leq x^\frac{3}{4}$, $\delta=\frac{h}{2x}$, and sufficiently large $x$. (Unconditionally, we have $J(x,\delta) = o(h^2 x)$ (see e.g. Soundararajan, 2016).)
% Goldston shows that an asymptotically smaller estimate for $J$ \cite[Eq.~36]{Goldston_90} implies primes between consecutive squares.

Selberg's result can also be used to deduce interval results for Goldbach numbers. A Goldbach number is defined as the sum of two odd primes. We have estimates for the number of Goldbach numbers in intervals, and for the smallest interval containing a Goldbach number. See Languasco \cite{Languasco_95_review} for a survey. Linnik \cite{Linnik_52} first used Hardy and Littlewood's circle method to prove, under RH, that there exist Goldbach numbers in $[x,x+H]$ with $H=O(\log^{3+\epsilon} x)$ for any $\epsilon>0$. Katai \cite{Katai_67} refined this to $H= O(\log^{2} x)$ using methods from \cite{Selberg_43}. Montgomery and Vaughan \cite[Thm.~2]{M_V_75} also proved this result, but used (\ref{Selberg-MV}). This result has also been proved in \cite{Goldston_90} and \cite[Cor.~1]{L_P_94} using other techniques. Going a step further, Goldston \cite{Goldston_90} showed that under RH and Montgomery's pair-correlation conjecture \cite{Montgomery_73} we can take $H=O(\log x)$, and under the same assumptions, Languasco \cite{Languasco_98} proved that there is a positive proportion of Goldbach numbers in this interval. For more recent work on the average number of Goldbach numbers in intervals, see \cite{G_S_2021_arxiv}.

We prove the following version of Montgomery and Vaughan's result in Section \ref{proof-goldbach-intervals}.
\begin{thm}\label{thm-Goldbach-intervals}
Assuming RH, there exists a Goldbach number in the interval $(x,x+ 9696 \log^2 x]$ for all $x\geq 2$.
\end{thm}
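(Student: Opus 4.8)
The plan is to follow the Montgomery--Vaughan strategy of detecting Goldbach numbers through a weighted count using $\theta$, and to control the error terms with the explicit mean-value estimate of Theorem \ref{mean-value-thm}. For a parameter $H = c\log^2 x$ to be fixed at the end, consider a suitable non-negative smooth (or piecewise-linear) weight $w$ supported on $(0,H]$ and form the sum
\begin{align*}
    S(x,H) = \sum_{\substack{p_1,p_2 \text{ odd primes}\\ x < p_1+p_2 \le x+H}} w(p_1+p_2-x)\log p_1 \log p_2,
\end{align*}
or equivalently the convolution integral $\int_{\mathbb{R}} \bigl(\theta(u+\tfrac{H}{2})-\theta(u-\tfrac{H}{2})\bigr)\bigl(\theta(x-u+\tfrac{H}{2})-\theta(x-u-\tfrac{H}{2})\bigr)\,du$ up to boundary adjustments; if $S(x,H)>0$ then $(x,x+H]$ contains a Goldbach number. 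First I would write each inner difference of $\theta$ as its expected main term (a length-$H$ chunk, i.e. $\approx H$) plus an error $E(u) = \theta(u+\tfrac{H}{2})-\theta(u-\tfrac{H}{2}) - H$, expand the product, and isolate the diagonal main term, which is of order $H^2 x$ — this is the positive quantity we want to dominate.

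The cross terms and the error--error term are then bounded using Cauchy--Schwarz and Theorem \ref{mean-value-thm}. Concretely, $\int |E(u)|^2\,du$ over $u \asymp x$ is exactly an integral of the shape $\int |\theta(y+\delta y) - \theta(y) - \delta y|^2\,dy$ with $\delta \asymp H/x = c\log^2 x / x$ (after the standard change of variables converting the fixed-length interval $H$ into a proportional one $\delta y$, which introduces only lower-order corrections since $H = o(x)$), so Theorem \ref{mean-value-thm} gives $\int |E(u)|^2 du \ll \delta x^2 \log^2 x \asymp H x \log^2 x$. Cauchy--Schwarz then bounds the error--error contribution by $\ll Hx\log^2 x$ and the main-times-error cross term by $\ll H \cdot (Hx \cdot Hx\log^2 x)^{1/2} = H^2 x \log x \cdot \sqrt{\text{const}}$; comparing with the main term $\gg H^2 x$, the cross term is the genuinely dangerous one and forces $c$ to be large enough, while the error--error term is comfortably smaller by a factor $\asymp 1/H$. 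I would also need the PNT on $[x,x+H]$-scale averages (or just $\theta(x) \sim x$ with an explicit RH error $|\theta(x)-x| \ll \sqrt{x}\log^2 x$, which is negligible here) to pin down the main term's constant precisely.

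The main obstacle will be bookkeeping the explicit constants so that the final inequality $S(x,H) > 0$ holds for \emph{all} $x \ge 2$ rather than just asymptotically: Theorem \ref{mean-value-thm} is only proved for $x \ge 10^8$ and $\delta \le 10^{-8}$, so for $x$ in the intermediate range — roughly $x$ up to where $c\log^2 x / x$ drops below $10^{-8}$ and a bit beyond — the estimate is unavailable and the claim must be verified by direct computation (e.g. checking that consecutive primes, hence Goldbach numbers via $p + 3$ or similar, are never more than $9696\log^2 x$ apart up to that bound, using known explicit data on prime gaps). Threading the constant $c = 9696$ through the Cauchy--Schwarz step with the factor $202$ from Theorem \ref{mean-value-thm}, together with the $2$-vs-$3$ parity/oddness adjustments (excluding the prime $2$ costs only $O(\log x)$) and the weight-shape optimisation, is where the real care is needed; choosing $w$ to be a simple tent/Fejér-type kernel keeps these computations tractable while still yielding a workable constant.
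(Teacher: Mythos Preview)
Your positivity-via-convolution approach is a legitimate route, but it is \emph{not} the paper's, and as written it contains a fatal slip. You bound the cross term by $H\cdot(Hx\cdot Hx\log^2 x)^{1/2}=H^2 x\log x$, then compare it with the main term $H^2 x$ and say this ``forces $c$ to be large enough''. But $H^2 x\log x$ exceeds $H^2 x$ by a factor of $\log x$, which no constant $c$ can absorb; with that bound the argument fails for all large $x$. The repair is that Cauchy--Schwarz on $\int H\cdot E$ over a range of length $x$ actually gives $H\cdot x^{1/2}\cdot(\int E^2)^{1/2}\ll H^{3/2}x\log x$, which \emph{is} dominated by $H^2 x$ once $H=c\log^2 x$ with $c$ large (alternatively one can estimate $\int E$ directly from the pointwise RH bound $\theta(y)-y\ll\sqrt{y}\log^2 y$ and avoid the Cauchy--Schwarz loss). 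Tracking explicit constants through either fix, together with the halving of $H$ needed to land Goldbach numbers specifically in $(x,x+H]$ rather than $(x-H,x+H)$, is doable but fiddly.

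The paper's argument (following Montgomery--Vaughan) is cleaner and never expands into main/cross/error pieces. It argues by contradiction: if $(x,x+h]$ contains no Goldbach number, then for every $y$ at least one of the intervals $(y,y+h/2]$, $(x-y,x-y+h/2]$ is prime-free, since a prime from each would sum into $(x,x+h]$. Tiling $(ax,(1-a)x]$ by such paired half-intervals shows that on a set of measure $(\tfrac12-a)x$ one has $\theta(t+\delta t)-\theta(t)=0$, hence $|G(t,\delta)|=\delta t$ there, yielding the lower bound $\int|G|^2\ge\delta^2\int_{ax}^{x/2}t^2\,dt=\tfrac{\delta^2 x^3}{3}(\tfrac18-a^3)$. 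With $\delta=h/(2x)$ and $h=C\log^2 x$ this contradicts Theorem~\ref{mean-value-thm} as soon as $C\ge 6\cdot 202/(\tfrac18-a^3)$; taking $a=10^{-8}$ gives $C=9696$, and small $x$ are covered by the verified Goldbach computation up to $4\cdot 10^{18}$. This pigeonhole-on-prime-free-intervals step is the key idea you are missing; it converts ``no Goldbach number'' directly into a lower bound on the mean-value integral without any bilinear expansion.
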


%We note that this result and that of Corollary \ref{some-intervals} were verified for smaller $x$ by the computations of Oliveira e Silva, Herzog, and Pardi \cite{O_H_P_14}. 

\section{An explicit version of Selberg's result}
\subsection{Preliminary lemmas}

In this section we prove a number of lemmas needed to prove Theorem \ref{mean-value-thm}. Here and hereafter, let $s=\sigma+it$ and let $\rho=\beta+i\gamma$ denote any non-trivial zero of $\zeta(s)$. Selberg's proof of (\ref{Selberg-MV}) requires a mean-value estimate for the logarithmic derivative of $\zeta(s)$ on RH. In particular, Lemma 4 of \cite{Selberg_43} states that for sufficiently large $T$ and $\sigma\in(1/2,3/4]$,
\begin{equation}\label{Selberg-zeta-MV}
\int_0^T \left| \frac{\zeta'}{\zeta}(\sigma+it) \right|^2 dt = O\left( \frac{T}{(\sigma-1/2)^2} \right).    
\end{equation}
There does not appear to be an explicit version of (\ref{Selberg-zeta-MV}), but Selberg's proof is effective. See also an estimate for a similar integral from Farmer \cite[Lem.~2]{Farmer_95}. There are explicit estimates for $\zeta'(s)/\zeta(s)$ in the critical strip, such as in \cite[Cor.~1(b)]{Simonic_arxiv} of the order $O((\log t)^{2(1-\sigma)} (\log\log t)^2)$, or Lemma 2.8 of \cite{Dudek_16p} of $O(\log^2t)$, but these would not give an estimate of the form (\ref{Selberg-zeta-MV}). We will make (\ref{Selberg-zeta-MV}) explicit in Lemma 4, by way of an explicit version of \cite[Lem.~3]{Selberg_43} in Lemma 3. The latter will need Lemmas 1 and 2.

\begin{Lem}{(Karatsuba, \& Korol\"{e}v \cite[Lem.~2]{K_K_2005})}\label{K-K-lemma}
For $|\sigma|\leq 2$ and $|t|\geq 10$, 
\begin{align}\label{zetas}
    \left| \frac{\zeta'}{\zeta}(s) - \sum_{\rho}\frac{1}{s-\rho} \right| \leq \frac{1}{2} \log |t| + 3.
\end{align}
\end{Lem}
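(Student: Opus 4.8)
The plan is to derive the inequality from the Hadamard product / partial fraction expansion of $\zeta'/\zeta$, which is the standard route to bounds of this shape. Recall that for all $s$ one has
\[
\frac{\zeta'}{\zeta}(s) = B - \frac{1}{s-1} + \frac{1}{2}\frac{\Gamma'}{\Gamma}\!\left(\frac{s}{2}+1\right) - \frac{1}{2}\log\pi + \sum_{\rho}\left(\frac{1}{s-\rho} + \frac{1}{\rho}\right),
\]
where $B = \sum_\rho 1/\rho$ in the symmetric sense, so that $B + \sum_\rho 1/\rho$ collapses and we may write $\zeta'/\zeta(s) - \sum_\rho 1/(s-\rho) = -1/(s-1) + \tfrac12\tfrac{\Gamma'}{\Gamma}(s/2+1) - \tfrac12\log\pi$, again with the sum over $\rho$ interpreted symmetrically (pairing $\rho$ with $\overline\rho$). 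First I would fix $s = \sigma + it$ with $|\sigma|\le 2$ and $|t|\ge 10$ and bound each of these three terms.

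The term $-1/(s-1)$ has modulus $|s-1|^{-1} \le (|t|)^{-1} \le 1/10$, which is harmless. The dominant contribution is the digamma term: for $\operatorname{Re}(s/2+1) = \sigma/2+1 \in [0,2]$ and large imaginary part $t/2$, Stirling's formula for $\Gamma'/\Gamma$ gives $\tfrac{\Gamma'}{\Gamma}(s/2+1) = \log(s/2+1) + O(1)$ with an explicit $O(1)$; taking real/absolute values, $\bigl|\tfrac12\tfrac{\Gamma'}{\Gamma}(s/2+1)\bigr| \le \tfrac12\log|t| + c$ for an explicit constant $c$, since $|\log(s/2+1)| \le \log|t/2| + O(1/|t|) = \log|t| - \log 2 + o(1)$. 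Combined with the $\tfrac12\log\pi$ constant and the $1/(s-1)$ bound, collecting constants should comfortably fit under the claimed $3$, provided the Stirling error term is tracked explicitly for $|t|\ge 10$.

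The main obstacle — and the only real work — is making the Stirling estimate for $\Gamma'/\Gamma$ fully explicit with a numerically verified constant on the strip $\operatorname{Re}(w)\in[0,2]$, $|\operatorname{Im}(w)|\ge 5$: one needs an inequality of the form $|\tfrac{\Gamma'}{\Gamma}(w) - \log w| \le \varepsilon$ there, and then to check that $\tfrac12(\log|t/2|+\varepsilon) + \tfrac12\log\pi + \tfrac1{10} \le \tfrac12\log|t| + 3$, i.e. that $\tfrac12\varepsilon + \tfrac12\log(\pi/2) + \tfrac1{10} \le 3$, which holds with a great deal of room. This is a routine but genuinely explicit Stirling computation; everything else is bookkeeping. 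One should also be slightly careful at the edge $|t|=10$ versus the region near $\sigma$ where $\zeta$ itself might be large, but since the left-hand side already subtracts off the full sum over zeros, no zero lies close enough to cause trouble once $|t|\ge 10$, and the bound is uniform. Since this lemma is quoted verbatim from \cite{K_K_2005}, in the paper itself one would simply cite it rather than reproduce this argument.
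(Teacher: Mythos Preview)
The paper does not prove this lemma at all: it is stated with attribution to Karatsuba and Korol\"{e}v \cite[Lem.~2]{K_K_2005} and used as a black box, exactly as you anticipated in your final sentence. Your sketch via the Hadamard partial-fraction formula and an explicit Stirling bound on the digamma term is the standard route and is essentially correct; note only that the signs on the $\tfrac12\log\pi$ and $\tfrac12\,\Gamma'/\Gamma(s/2+1)$ terms in your displayed identity are flipped relative to the usual convention (cf.\ Davenport, Ch.~12), though this is immaterial once you take absolute values.
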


\begin{Lem}{(Selberg \cite[Lem.~2]{Selberg_43})}\label{Lem2}
    For $x>1$, and $\Lambda(n)$ denoting the von Mangoldt function,
\begin{equation*}
    \Lambda_x(n)=
    \begin{cases}
        \Lambda(n),&1\leq n\leq x\\
        \Lambda(n)\frac{\log(x^2/n)}{\log x},&x\leq n\leq x^2. 
    \end{cases}
\end{equation*}
Then, for any $s\neq 1$ and not a zero of $\zeta(s)$,
\begin{align}\label{formula}
    -\frac{\zeta'}{\zeta}(s) &= \sum_{n<x^2}\frac{\Lambda_x(n)}{n^s} + \frac{x^{1-s}-x^{2(1-s)}}{(1-s)^2\log x} - \frac{1}{\log x}\sum_{q=1}^\infty\frac{x^{-2q-s}-x^{-2(2q+s)}}{(2q+s)^2} \\
    &\qquad - \frac{1}{\log x}\sum_{\rho}\frac{x^{\rho-s}-x^{2(\rho-s)}}{(s-\rho)^2}. \nonumber
\end{align}
\end{Lem}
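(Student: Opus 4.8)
The plan is to prove the identity (\ref{formula}) by a Perron--Mellin contour integration, the standard route to truncated explicit formulae of this type. Write $L=\log x$ and $F(w)=(x^{2w}-x^{w})/w^{2}$, and begin from the elementary Mellin identity $\frac{1}{2\pi i}\int_{(c)}y^{w}w^{-2}\,dw=\max(\log y,0)$, valid for $y>0$ and $c>0$ (shift the line to $-\infty$ when $y>1$, to $+\infty$ when $y\le 1$). Applying this with $y=x^{2}/n$ and with $y=x/n$, and recalling the definition of $\Lambda_{x}$, one finds that for any $c>\max(0,\,1-\sigma)$
\begin{align*}
\frac{1}{2\pi i}\int_{(c)}F(w)\Bigl(-\tfrac{\zeta'}{\zeta}(s+w)\Bigr)\,dw
&=\sum_{n\ge 1}\frac{\Lambda(n)}{n^{s}}\cdot\frac{1}{2\pi i}\int_{(c)}\frac{(x^{2}/n)^{w}-(x/n)^{w}}{w^{2}}\,dw\\
&=L\sum_{n<x^{2}}\frac{\Lambda_{x}(n)}{n^{s}}.
\end{align*}
The interchange of summation and integration is justified by Fubini: $\sigma+c>1$ makes $\sum_{n}\Lambda(n)n^{-(s+w)}$ converge absolutely on $\Re w=c$, while $|F(w)|\le(x^{2c}+x^{c})|w|^{-2}$ there and $\int_{(c)}|w|^{-2}\,|dw|=\pi/c<\infty$.

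Next I would shift the line of integration leftwards, from $\Re w=c$ to $\Re w=-U$, and collect residues. The integrand $F(w)(-\zeta'/\zeta(s+w))$ has a \emph{simple} pole at $w=0$ — the double zero of $w^{2}$ is cancelled to first order by $x^{2w}-x^{w}=Lw+O(w^{2})$, and $s$ is by hypothesis neither $1$ nor a zero of $\zeta$ — with residue $L\cdot(-\zeta'/\zeta(s))$; a simple pole at $w=1-s$ from the pole of $\zeta$ at $1$, with residue $(x^{2(1-s)}-x^{1-s})/(1-s)^{2}$; a simple pole at $w=\rho-s$ for each nontrivial zero $\rho$ (counted with multiplicity), with residue $(x^{\rho-s}-x^{2(\rho-s)})/(s-\rho)^{2}$; and a simple pole at $w=-2q-s$ for each integer $q\ge 1$, coming from the trivial zero at $-2q$, with residue $(x^{-2q-s}-x^{-2(2q+s)})/(2q+s)^{2}$. (These use that $-\zeta'/\zeta$ has residue $+1$ at $s=1$ and residue $-1$ at each simple zero, together with $(\rho-s)^{2}=(s-\rho)^{2}$.) Since every nontrivial zero already lies to the right of $\Re w=-U$ once $U>\sigma$, only the trivial-zero poles force one to take $U\to\infty$. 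Dividing the resulting identity by $L$, solving for $-\zeta'/\zeta(s)$, and tidying signs then reproduces (\ref{formula}) exactly — \emph{provided} the integral along $\Re w=-U$ tends to $0$ as $U\to\infty$.

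That last point is where the real work lies, and I expect it to be the main obstacle. To make the shift rigorous I would truncate at height $T$, integrate around the rectangle with vertices $c\pm iT$ and $-U\pm iT$, and bound the two horizontal segments by $O\bigl((U+c)\,x^{2c}\,(\log^{2}T)\,T^{-2}\bigr)$, using $|F(w)|\le 2x^{2c}|w|^{-2}$ and a standard bound for $\zeta'/\zeta$ on horizontal lines at height $T$ avoiding zeros (which is no worse to the left, by the functional equation); letting $T\to\infty$ through such heights kills these segments and, incidentally, shows $\sum_{\rho}$ converges. It then remains to show $\frac{1}{2\pi i}\int_{(-U)}F(w)(-\zeta'/\zeta(s+w))\,dw\to 0$ as $U\to\infty$. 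On $\Re w=-U$ one has $|x^{2w}-x^{w}|\le 2x^{-U}$, and I would control $\zeta'/\zeta(s+w)$ through the functional equation in the shape
\begin{equation*}
\frac{\zeta'}{\zeta}(z)=\log\pi-\tfrac12\tfrac{\Gamma'}{\Gamma}\!\left(\tfrac{z}{2}\right)-\tfrac12\tfrac{\Gamma'}{\Gamma}\!\left(\tfrac{1-z}{2}\right)-\frac{\zeta'}{\zeta}(1-z).
\end{equation*}
For $\Re z=\sigma-U$ with $U$ large, $\zeta'/\zeta(1-z)$ is bounded (a Dirichlet series in its half-plane of absolute convergence) and the two digamma terms are $O\bigl(\log(U+|z|)\bigr)$, their poles — which sit exactly at the trivial zeros — being avoided by choosing $U\notin\{\sigma,\sigma+2,\sigma+4,\dots\}$; hence the integral is $O\bigl(x^{-U}U^{-1}\log U\bigr)\to 0$. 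Finally, the series $\sum_{\rho}$ and $\sum_{q}$ in (\ref{formula}) converge absolutely — the former since $0\le\beta\le 1$ and $\sum_{\rho}|s-\rho|^{-2}<\infty$ for fixed $s$ that is not a zero, the latter by comparison with $\sum_{q}q^{-2}$ — so assembling the pieces completes the proof; the residue calculus and the sign bookkeeping, though fiddly, are routine.
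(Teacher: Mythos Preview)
Your proposal is correct and is essentially the classical argument: the paper does not give its own proof of this lemma but simply quotes it from Selberg \cite[Lem.~2]{Selberg_43}, and what you have written is the standard Perron--Mellin contour-shift proof that underlies Selberg's original. The residue bookkeeping checks out (in particular your handling of the simple pole at $w=0$ via $x^{2w}-x^{w}=Lw+O(w^{2})$ and the multiplicity convention for $\sum_\rho$), and your two-step limit $T\to\infty$ then $U\to\infty$ with the functional-equation bound on the far-left vertical is the right way to justify the shift.
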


\begin{Lem}\label{Lem3}
    Assume RH. For $x\geq x_0\geq 3$, $|t|\geq 10$, $\frac{1}{2} +\frac{\alpha}{\log x} \leq \sigma \leq 1$, and $\alpha\geq 0.722$,
    \begin{equation*}
        \left| \frac{\zeta'}{\zeta}(s) \right| \leq A_2 \left| \sum_{n<x^2} \frac{\Lambda_x(n)}{n^s} \right| + \frac{A_1 A_2 \left( \log |t| + 6 \right)}{2(\sigma-\frac{1}{2})\log x}  + \frac{A_1 A_2 x}{t^2 \log x} + \frac{A_2 c_1 e^{-\alpha}}{t^2 x^\frac{5}{2}\log x}.
    \end{equation*}
where $A_1=e^{-\alpha}+e^{-2\alpha}$, $A_2 = \frac{\alpha}{\alpha - A_1}$, and $c_1 = 5/4$ for all $x\geq 3$.
\end{Lem}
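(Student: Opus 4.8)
The plan is to start from Selberg's truncated explicit formula in Lemma~\ref{Lem2}, estimate each of the three terms on its right-hand side other than the Dirichlet polynomial $\sum_{n<x^2}\Lambda_x(n)n^{-s}$, and then solve for $|\zeta'(s)/\zeta(s)|$, which reappears on the right once the zeros are handled. Throughout I would use RH to write each non-trivial zero as $\rho=\tfrac12+i\gamma$, together with $\sigma\geq\tfrac12+\tfrac{\alpha}{\log x}$, $x\geq 3$, and $|t|\geq 10$. The two ``elementary'' terms are routine: in $\frac{x^{1-s}-x^{2(1-s)}}{(1-s)^2\log x}$ we have $1-\sigma\leq\tfrac12-\tfrac{\alpha}{\log x}$, so $|x^{1-s}|=x^{1-\sigma}\leq x^{1/2}e^{-\alpha}\leq xe^{-\alpha}$ and $|x^{2(1-s)}|\leq xe^{-2\alpha}$, while $|1-s|^2\geq t^2$; this bounds the term by $A_1x/(t^2\log x)$. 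For the sum over $q$, $|x^{-2q-s}|\leq x^{-2q-1/2}e^{-\alpha}$ and $|(2q+s)^2|\geq t^2$, so summing the geometric series (and checking that for $x\geq 3$ the $x^{-2(2q+s)}$ pieces contribute little enough) gives a bound $c_1e^{-\alpha}/(t^2x^{5/2}\log x)$ with $c_1=5/4$.

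The crux is the sum over zeros. On RH, $|x^{\rho-s}|=x^{1/2-\sigma}\leq e^{-\alpha}$ and $|x^{2(\rho-s)}|\leq e^{-2\alpha}$, so each numerator is at most $A_1$ and
$$\frac{1}{\log x}\left|\sum_\rho\frac{x^{\rho-s}-x^{2(\rho-s)}}{(s-\rho)^2}\right|\leq\frac{A_1}{\log x}\sum_\rho\frac{1}{|s-\rho|^2}.$$
The identity that makes this tractable is $\operatorname{Re}\frac{1}{s-\rho}=\frac{\sigma-1/2}{|s-\rho|^2}$ (valid since $\rho=\tfrac12+i\gamma$), whence $\sum_\rho|s-\rho|^{-2}=(\sigma-\tfrac12)^{-1}\operatorname{Re}\sum_\rho(s-\rho)^{-1}$, the latter sum read symmetrically as in Lemma~\ref{K-K-lemma}. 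Lemma~\ref{K-K-lemma} then gives $\operatorname{Re}\sum_\rho(s-\rho)^{-1}\leq|\zeta'(s)/\zeta(s)|+\tfrac12\log|t|+3$, so the zero-sum term is at most $\frac{A_1}{(\sigma-1/2)\log x}\bigl(|\zeta'(s)/\zeta(s)|+\tfrac12\log|t|+3\bigr)$.

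Feeding the three bounds into Lemma~\ref{Lem2} and writing $L=|\zeta'(s)/\zeta(s)|$ and $P=|\sum_{n<x^2}\Lambda_x(n)n^{-s}|$ gives
$$L\leq P+\frac{A_1x}{t^2\log x}+\frac{c_1e^{-\alpha}}{t^2x^{5/2}\log x}+\frac{A_1}{(\sigma-\tfrac12)\log x}\Bigl(L+\tfrac12\log|t|+3\Bigr).$$
Since $(\sigma-\tfrac12)\log x\geq\alpha$, the coefficient of $L$ on the right is at most $A_1/\alpha$, and the hypothesis $\alpha\geq 0.722$ is precisely what guarantees $\alpha>e^{-\alpha}+e^{-2\alpha}=A_1$, hence $1-A_1/\alpha>0$. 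Moving the $L$-term over and dividing by $1-A_1/\alpha$ — equivalently, multiplying through by $A_2=\alpha/(\alpha-A_1)$, and retaining $(\sigma-\tfrac12)\log x$ in the denominator of the $\tfrac12\log|t|+3$ term so it becomes $\frac{A_1A_2(\log|t|+6)}{2(\sigma-1/2)\log x}$ — yields precisely the stated inequality. The only genuine obstacle is the zero-sum step: recognising that $\sum_\rho|s-\rho|^{-2}$ is $(\sigma-\tfrac12)^{-1}$ times the real part of a sum to which Karatsuba--Korol\"ev applies, and then dealing with the resulting self-reference in $L$ through the absorption argument, which is what forces the condition on $\alpha$ and produces the constant $A_2$.
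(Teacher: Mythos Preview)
Your proof is correct and follows essentially the same approach as the paper: bound the three non-Dirichlet-polynomial terms in Lemma~\ref{Lem2}, handle the zero-sum via $\operatorname{Re}\tfrac{1}{s-\rho}=\tfrac{\sigma-1/2}{|s-\rho|^2}$ together with Lemma~\ref{K-K-lemma}, and then absorb the resulting $|\zeta'/\zeta|$ on the right using $(\sigma-\tfrac12)\log x\geq\alpha>A_1$. The paper's computation is organised slightly differently (it first bounds $F(s,x)=|\zeta'/\zeta(s)+\sum_{n<x^2}\Lambda_x(n)n^{-s}|$ and derives $c_1=1+2/(x_0^2-1)=5/4$ by splitting $q=1$ from $q\geq2$), but the ideas and the final rearrangement are identical.
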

\begin{proof}
    We assume RH throughout, so $\rho=\frac{1}{2} +i\gamma$. Starting with Lemma \ref{Lem2}, let $Z_i$ for $i=1, 2, 3$ denote the last three terms on the RHS of (\ref{formula}). For interest, we follow a similar proof to that of Lemma 2 in \cite{Simonic_22}. Using (\ref{formula}) we will denote
    \begin{equation}\label{F(s,x)}
        F(s,x) = \left| \frac{\zeta'}{\zeta}(s) + \sum_{n<x^2} \frac{\Lambda_x(n)}{n^s} \right| \leq |Z_1| + |Z_2| + |Z_3| 
    \end{equation}
for $x>1$, $1/2<\sigma\leq 1$, and $|t|>0$. We have
    \begin{align*}
        |Z_1| &= \left| \frac{x^{1-s}-x^{2(1-s)}}{(1-s)^2\log x} \right| \leq \frac{x^{2-2\sigma} + x^{1-\sigma}}{t^2 \log x},
    \end{align*}
and, using the sum of a geometric series,
    \begin{align*}
        |Z_2| &= \left| \frac{1}{\log x} \sum_{q=1}^\infty \frac{x^{-(2q+s)}-x^{-2(2q+s)}}{(2q+s)^2} \right| \\
        &\leq \frac{1}{\log x}\left( \frac{x^{-(2+\sigma)} + x^{-2(2+\sigma)}}{t^2+4}  + \frac{1}{t^2+16} \sum_{q=2}^\infty \left( x^{-(2q+\sigma)} + x^{-2(2q+\sigma)} \right)  \right) \\
        &\leq \frac{c_1}{(t^2+4) x^{2+\sigma} \log x},
    \end{align*}
where $c_1 = 1 + 2/(x_0^2-1)$ for $x\geq x_0 \geq 3$. For $Z_3$ we will use Lemma \ref{K-K-lemma} and $$\text{Re}\left\lbrace \frac{1}{s-\rho} \right\rbrace =  \frac{\sigma-\frac{1}{2}}{(\sigma-\frac{1}{2})^2+(t-\gamma)^2}.$$
To begin,
    \begin{align*}
        |Z_3| &= \left| \frac{1}{\log x}\sum_{\rho}\frac{x^{\rho-s}-x^{2(\rho-s)}}{(s-\rho)^2} \right| \\
        &\leq \frac{x^{\frac{1}{2}-\sigma}+x^{1-2\sigma}}{\log x} \sum_{\gamma}\frac{1}{(\sigma-\frac{1}{2})^2+(t-\gamma)^2}.
    \end{align*}
Since (\ref{zetas}) implies that for $|t|\geq 10$
    \begin{align*}
        \sum_{\rho} \text{Re}\left\lbrace \frac{1}{s-\rho} \right\rbrace  \leq \text{Re}\left\lbrace \frac{\zeta'}{\zeta}(s) \right\rbrace  + \frac{1}{2} \log |t| + 3,
    \end{align*}
and $\text{Re}(s)\leq |s|$ for all $s$, we have
    \begin{align*}
        |Z_3| &\leq \frac{x^{\frac{1}{2}-\sigma}+x^{1-2\sigma}}{(\sigma-\frac{1}{2})\log x} \left( \left| \frac{\zeta'}{\zeta}(s) \right|  + \frac{1}{2} \log |t| + 3 \right).
    \end{align*}

Substituting these bounds into (\ref{F(s,x)}), we have
\begin{align*}
    F(s,x) \leq \frac{x^{2-2\sigma} + x^{1-\sigma}}{t^2 \log x} + \frac{c_1}{(t^2+4) x^{2+\sigma} \log x} + \frac{x^{\frac{1}{2}-\sigma}+x^{1-2\sigma}}{(\sigma-\frac{1}{2})\log x} \left( \left| \frac{\zeta'}{\zeta}(s) \right|  + \frac{1}{2} \log |t| + 3 \right)
\end{align*}
for $x\geq x_0$, $\frac{1}{2}< \sigma \leq 1$, and $|t|\geq 10$. If we further impose $\sigma\geq \frac{1}{2} + \frac{\alpha}{\log x}$ with some $\alpha>0$, the bound simplifies to
\begin{align*}
    F(s,x) \leq \frac{e^{-2\alpha} x + e^{-\alpha}\sqrt{x}}{t^2 \log x} + \frac{c_1 e^{-\alpha}}{(t^2+4) x^{5/2}\log x} + \frac{e^{-\alpha}+e^{-2\alpha}}{(\sigma-\frac{1}{2})\log x} \left( \left| \frac{\zeta'}{\zeta}(s) \right|  + \frac{1}{2} \log |t| + 3 \right).
\end{align*}
Let $A_1=e^{-\alpha}+e^{-2\alpha}$ and $A_2 = \frac{\alpha}{\alpha - A_1}$. Assuming $\alpha > A_1$, which is satisfied for $\alpha\geq 0.722$, the above can be rearranged and simplified to
\begin{align*}
    \left| \frac{\zeta'}{\zeta}(s) \right| \leq A_2 \left| \sum_{n<x^2} \frac{\Lambda_x(n)}{n^s} \right| + \frac{A_1 A_2 \left( \log |t| + 6 \right)}{2(\sigma-\frac{1}{2})\log x}  + \frac{A_1 A_2 x}{t^2 \log x} + \frac{A_2c_1 e^{-\alpha}}{t^2 x^\frac{5}{2}\log x}.
\end{align*}
\end{proof}

\begin{Lem}\label{Lem4}
    Assume RH. For $T\geq T_0$, $\frac{1}{2}+\frac{\alpha}{\log T} \leq \sigma \leq \frac{3}{4}$, and $\alpha\geq 0.722$,
    \begin{equation}
        \int_0^T \left| \frac{\zeta'}{\zeta}(\sigma+it) \right|^2 dt \leq \frac{A_4 T}{(\sigma-1/2)^2},
    \end{equation}
    where $A_4$ is dependent on $\alpha$, and given in (\ref{A5}). For $T_0=10^3$ we can take $A_4 = 0.576$ with $\alpha=37$, or for $T_0=10^8$ we can take $A_4 = 0.535$ with $\alpha=26$. 
\end{Lem}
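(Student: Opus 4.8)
The plan is to start from Lemma 3, which for $\tfrac12+\tfrac{\alpha}{\log x}\le\sigma\le 1$ bounds $|\zeta'/\zeta(s)|$ by a multiple of the Dirichlet polynomial $D(s):=\sum_{n<x^2}\Lambda_x(n)n^{-s}$ plus three explicit error terms; I would apply it with the free parameter $x$ chosen as a function of $T$, most naturally $x=T$ (so that the hypothesis $\sigma\ge\tfrac12+\tfrac{\alpha}{\log T}$ matches and $|t|\le T$ keeps the error terms under control). Then $|\zeta'/\zeta(\sigma+it)|^2 \le 4A_2^2\,|D(\sigma+it)|^2 + (\text{cross terms and squares of the three error terms})$, using $(a+b+c+d)^2\le 4(a^2+b^2+c^2+d^2)$ or a weighted Cauchy--Schwarz. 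Integrating over $t\in[0,T]$, the main term is $4A_2^2\int_0^T |D(\sigma+it)|^2\,dt$, the error terms integrate against $\int dt/t^2$ (convergent, contributing $O(1)$ once $|t|$ is bounded away from $0$ — for small $|t|$ one falls back on a crude bound for $\zeta'/\zeta$, or one notes the measure of $[0,10]$ is a harmless additive constant absorbed into $A_4$), and the $\tfrac12\log|t|+6$ factor times $1/(\sigma-\tfrac12)$ integrates to something of size $O(T\log^2 T/((\sigma-\tfrac12)^2\log^2 x))$ after using $\sigma-\tfrac12\ge\alpha/\log x$, which with $x=T$ is $O(T/\alpha^2)$ — small for large $\alpha$.

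The heart of the argument is the mean-square bound for the Dirichlet polynomial. By the Montgomery--Vaughan mean value theorem (or the classical $\int_0^T|\sum a_n n^{-it}|^2\,dt = \sum|a_n|^2 n^{-2\sigma}(T+O(n))$), one gets
$$\int_0^T |D(\sigma+it)|^2\,dt \le (T+O(x^2))\sum_{n<x^2}\frac{\Lambda_x(n)^2}{n^{2\sigma}}.$$
For $\sigma$ close to $\tfrac12$, $\sum_{n<x^2}\Lambda_x(n)^2 n^{-2\sigma} \ll \sum_{n<x^2}\Lambda(n)^2 n^{-2\sigma}$, and by partial summation against $\sum_{n\le u}\Lambda(n)^2 \sim u\log u$ (for which an explicit version is available, e.g. from explicit Chebyshev-type bounds) this is $\ll \int_1^{x^2} u^{-2\sigma}\log u\,du \ll \frac{x^{2-4\sigma}\log x}{\,2\sigma-1\,}\cdot(\text{const})$ — wait, more carefully, for $\sigma>\tfrac12$ the integral $\int_1^\infty u^{-2\sigma}\log u\,du = (2\sigma-1)^{-2}$, so $\sum_{n}\Lambda(n)^2 n^{-2\sigma} \ll (2\sigma-1)^{-2} = \tfrac14(\sigma-\tfrac12)^{-2}$. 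With $x=T$ the factor $T+O(x^2)=T+O(T^2)$ is problematic, so in fact one should take $x$ a small power of $T$, say $x=T^{\eta}$ with $\eta<\tfrac12$, or truncate: split $D$ into $n\le T$ and $T<n<x^2$. The cleaner route, and the one I would pursue, is to choose $x$ with $x^2\le T$ (e.g. $x=\sqrt{T}$), so that $T+O(x^2)\ll T$; then the hypothesis becomes $\sigma\ge\tfrac12+\tfrac{2\alpha}{\log T}$, i.e. Lemma 3 is applied with its own $\alpha$ replaced by $2\alpha$ — one tracks constants accordingly. This yields $\int_0^T|D|^2\,dt \ll T(\sigma-\tfrac12)^{-2}$, hence the claimed shape.

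Collecting terms, $\int_0^T|\zeta'/\zeta(\sigma+it)|^2\,dt \le (\sigma-\tfrac12)^{-2}\big[A_2^2\,C_{\mathrm{MV}} + o(1)\big]T$, and one defines $A_4$ as the resulting explicit constant; optimising over $\alpha$ (large $\alpha$ shrinks the error terms and pushes $A_2\to 1$, but shrinks the admissible $\sigma$-range and is constrained by $T_0$ via $\tfrac12+\tfrac{\alpha}{\log T_0}\le\tfrac34$, i.e. $\alpha\le\tfrac14\log T_0$) gives the stated numerics: $\alpha=37$, $A_4=0.576$ at $T_0=10^3$ and $\alpha=26$, $A_4=0.535$ at $T_0=10^8$. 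I will record the explicit formula for $A_4$ in an equation labelled \eqref{A5}.

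\medskip

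The main obstacle I anticipate is the honest bookkeeping of the explicit constant: making the Montgomery--Vaughan term explicit (the best explicit forms of $\int_0^T|\sum a_n n^{-it}|^2$ have a genuine $2\pi$-type constant and an error term whose explicit dependence on $\sum n|a_n|^2$ must be checked against $x^2\le T$), making $\sum_{n<x^2}\Lambda(n)^2 n^{-2\sigma}\le \tfrac14 c(\sigma-\tfrac12)^{-2}$ explicit with $c$ close to $1$ (this needs an explicit $\sum_{n\le u}\Lambda(n)^2\le u\log u + \ldots$, e.g. via explicit bounds on $\psi$ and $\theta$), and verifying that the contribution of $|t|\le 10$ and of the three error terms of Lemma 3 is genuinely negligible at $T_0=10^3$ and $T_0=10^8$ — the constraint $\alpha\le\tfrac14\log T_0$ is quite tight at $T_0=10^3$ (giving $\alpha\le 1.72\log 10\approx$ a bound forcing $\alpha$ not too large), so the trade-off there is delicate. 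Everything else is routine: triangle inequality, Cauchy--Schwarz, and partial summation.
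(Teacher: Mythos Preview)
Your overall strategy matches the paper's: apply Lemma~3, square via Cauchy--Schwarz, integrate over $[10,T]$, bound the mean square of the Dirichlet polynomial $D(s)=\sum_{n<x^2}\Lambda_x(n)n^{-s}$, choose $x$ as a small power of $T$, handle $t\in[0,10]$ by a crude maximum-modulus bound, and optimise. Two technical differences are worth flagging.

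First, for $\int_{10}^T|D(\sigma+it)|^2\,dt$ the paper does not invoke Montgomery--Vaughan: it expands the square directly, and bounds the off-diagonal contribution elementarily via $\Lambda_x\le\log x$ and $|\log(m/n)|^{-1}\le (mn)^{1/2}/|m-n|$ (from $\log\lambda>1-\lambda^{-1}$), obtaining a term of size $O(x^2\log^3 x)$. This is the same order as the $O(x^2)$ error your Montgomery--Vaughan route gives, so neither approach wins here; the paper's has the advantage of being entirely self-contained. Second, and more consequential for the numerics, the diagonal sum $\sum_{n<x^2}\Lambda_x(n)^2 n^{-2\sigma}$ is not handled by partial summation against explicit bounds for $\sum_{n\le u}\Lambda(n)^2$. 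Instead the paper uses $\Lambda_x(n)^2\le\Lambda(n)\log n$ and the exact identity
\[
\sum_{n\ge 1}\frac{\Lambda(n)\log n}{n^{2\sigma}}=\left.\frac{d}{ds}\frac{\zeta'}{\zeta}(s)\right|_{s=2\sigma}=\frac{1}{(2\sigma-1)^2}-\sum_{n\ge 1}\frac{1}{(2\sigma+2n)^2}-\sum_\rho\frac{1}{(2\sigma-\rho)^2},
\]
which under RH gives the clean leading term $\tfrac14(\sigma-\tfrac12)^{-2}$ plus the small explicit remainder $\pi^2/8-1+\sum_\gamma\gamma^{-2}$. This is what drives $A_4$ down near $\tfrac12$; your partial-summation route would work but would not deliver as sharp a constant without additional effort. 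Finally, the paper takes $x=T^\nu$ with $\nu$ a free parameter optimised to roughly $0.06$--$0.16$ (much smaller than your suggested $\nu=\tfrac12$), so the off-diagonal $T^{2\nu}\log^3 T$ term is genuinely $o(T)$ and the constraint $\sigma\le\tfrac34$ does not cap $\alpha$ as tightly as you feared.
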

\begin{proof}
    Using the Cauchy--Schwarz inequality, Lemma \ref{Lem3} implies
    \begin{align}\label{log-zeta squared}
        \left| \frac{\zeta'}{\zeta}(\sigma+it) \right|^2 &\leq 2A_2^2 \left| \sum_{n<x^2} \frac{\Lambda_x(n)}{n^s} \right|^2  +  \frac{2A_1^2 A_2^2 \left( \log |t| + 6 \right)^2 }{(\sigma-\frac{1}{2})^2\log^2 x}  + \frac{8A_1^2 A_2^2 x^2}{t^4 \log^2 x} + \frac{8 A_2^2 c_1^2  e^{-2\alpha}}{t^4 x^5\log^2 x},
    \end{align}
    over the same range of variables and with the same constants as defined in Lemma \ref{Lem3}. As (\ref{log-zeta squared}) holds over $|t|\geq 10$, it can be integrated over $t\in[10, T]$, for some $T>10$. For the first term,
    \begin{align}\label{lambda-mean-value}
         \int_{10}^T \left| \sum_{n<x^2} \frac{\Lambda_x(n)}{n^s} \right|^2 dt &= (T-10) \sum_{n<x^2} \frac{\Lambda_x(n)^2}{n^{2\sigma}} + \sum_{\substack{m,n<x^2 \\ m\neq n}} \frac{\Lambda_x(m)\Lambda_x(n)}{(mn)^{\sigma}}  \int_{10}^T \left(\frac{n}{m}\right)^{it} dt \\ \nonumber
         &< T \sum_{n<x^2} \frac{\Lambda_x(n)^2}{n^{2\sigma}} + 2\sum_{\substack{m,n<x^2 \\ m\neq n}} \frac{\Lambda_x(m)\Lambda_x(n)}{(mn)^{\sigma} \left| \log (m/n) \right|}.
    \end{align}
The first sum in (\ref{lambda-mean-value}) can be bounded with 
\begin{align*}
    \sum_{n<x^2} \frac{\Lambda_x(n)^2}{n^{2\sigma}} &< \sum_{n=1}^\infty \frac{\Lambda(n)\log n}{n^{2\sigma}} = \frac{d}{ds} \left[ \frac{\zeta'}{\zeta}(s) \right]_{s=2\sigma}.
\end{align*}
For non-trivial zeros $\rho$, and all $s\in\mathbb{C}$, it is known that (e.g. see (8) and (9) in \cite[p.~80]{Davenport_1980}) 
\begin{equation}
    \frac{d}{ds}\left(\frac{\zeta'(s)}{\zeta(s)} \right) = \frac{1}{(s-1)^2} - \sum_{n=1}^\infty \frac{1}{(s+2n)^2} - \sum_\rho \frac{1}{(s-\rho)^2}.
\end{equation}
Thus, for $\sigma>\frac{1}{2}$,
\begin{align*}
    \sum_{n<x^2} \frac{\Lambda_x(n)^2}{n^{2\sigma}} &< \frac{1}{(2\sigma-1)^2} + \sum_{n=1}^\infty \frac{1}{(2\sigma+2n)^2} + \sum_\gamma \frac{1}{|2\sigma-1/2+i\gamma|^2} \\
    &\leq \frac{1}{4(\sigma-1/2)^2} + \frac{\pi^2}{8} - 1 + \sum_\gamma \frac{1}{\gamma^2},
\end{align*}
where $\sum_\gamma 1/\gamma^2 < c_0 = 0.04621$, computed in \cite[Cor.~1]{B_P_T_21}.

For the second sum in (\ref{lambda-mean-value}) we can use $\log \lambda> 1 - \lambda^{-1}$ over $\lambda>1$, so for $\sigma>1/2$,
\begin{align}\label{lambdas}
    \sum_{\substack{m,n<x^2 \\ m\neq n}} \frac{\Lambda_x(m)\Lambda_x(n)}{(mn)^{\sigma} \left| \log (m/n) \right|} < \log^2x \sum_{\substack{m,n<x^2 \\ m\neq n}} \left( \frac{1}{\sqrt{mn}} + \frac{1}{|m-n|} \right).
\end{align}
Note that the bound on $|\log(m/n)|$ holds for both $m>n$ and $n>m$ because of the symmetry in the resulting expression. For $x\geq 1$, partial summation gives 
\begin{equation*}
    \sum_{n<x^2} \frac{1}{\sqrt{n}} < 2x,
\end{equation*}
and the Euler–Maclaurin formula gives
\begin{align*}
    \sum_{\substack{m,n<x^2 \\ m\neq n}} \frac{1}{|m-n|} = 2\sum_{m<x^2} \sum_{n<m} \frac{1}{m-n} &< 2\sum_{m<x^2} \sum_{k<x^2} \frac{1}{k} < 4x^2\log x + 2\gamma x^2 + 1,
\end{align*}
whence we have
\begin{align*}
    \sum_{\substack{m,n<x^2 \\ m\neq n}} \frac{\Lambda_x(m)\Lambda_x(n)}{(mn)^{\sigma} \left| \log (m/n) \right|} &<  4x^2\log^3 x + (4+2\gamma)x^2\log^2x + \log^2x.
\end{align*}
Returning to (\ref{lambda-mean-value}), we have
    \begin{align*}
         \int_{10}^T \left| \sum_{n<x^2} \frac{\Lambda_x(n)}{n^s} \right|^2 dt &< T \left( \frac{1}{4(\sigma-1/2)^2} + A_3 \right) + 8c_2 x^2\log^3 x,
    \end{align*}
where $A_3 = \pi^2/8 - 1 + c_0$ and $c_2 = 2.25$ for $x\geq 3$. Using this in (\ref{log-zeta squared}) gives
\begin{align*}
    \frac{1}{A_2^2} \int_{10}^T \left| \frac{\zeta'}{\zeta}(\sigma+it) \right|^2 dt &\leq \frac{T}{2(\sigma-\frac{1}{2})^2} + 2 A_3 T + 16 c_2 x^2\log^3x  \\
    &\quad + \frac{2A_1^2}{(\sigma-\frac{1}{2})^2\log^2 x} \int_{10}^T \left( \log t + 6 \right)^2 dt + 8 \left( \frac{A_1^2 x^2}{\log^2 x} +  \frac{c_1^2 e^{-2\alpha}}{x^5\log^2 x}  \right) \int_{10}^T \frac{1}{t^4} dt  \\
    &\leq \frac{T}{2(\sigma-\frac{1}{2})^2} + 2 A_3 T + 16 c_2 x^2\log^3x  +  \frac{2A_1^2 c_3 T\log^2T}{(\sigma - \frac{1}{2})^2 \log^2 x}  \\
    &\quad +  \frac{1}{375} \left( \frac{A_1^2 x^2}{\log^2 x} +  \frac{c_1^2 e^{-2\alpha}}{x^5\log^2 x}  \right),
\end{align*}
where we can take $c_3 = 1 + \frac{10}{\log T_0} + \frac{26}{\log^2 T_0}$ for any $T_0 > 10$. Also note that the $1/375$ comes from estimating the second integral over $t$. We will now take $x=T^{\nu}$, and for $x\geq x_0$ and $T\geq T_0$, this will be for any $\nu\geq \log x_0/\log T_0$. The previous bound becomes
\begin{align*}
    \frac{1}{A_2^2} \int_{10}^T \left| \frac{\zeta'}{\zeta}(\sigma+it) \right|^2 dt &\leq \left( \frac{1}{2} + \frac{2A_1^2 c_3}{\nu^2} \right) \frac{T}{(\sigma-\frac{1}{2})^2}  +  2 A_3 T + 16c_2 \nu^3 T^{2\nu} \log^3T  \\
    &\quad  +  \frac{A_1^2 T^{2\nu}}{375\nu^2 \log^2 T}  +  \frac{c_1^2 T^{1-5\nu}}{375e^{2\alpha}\nu^2 \log^2 T}.
\end{align*}

It remains to estimate the integral over $t\in[0,10]$. By the maximum modulus principle, $$\left| \frac{\zeta'}{\zeta}(s) \right| \leq \text{max}_{z\in \delta S} \left| \frac{\zeta'}{\zeta}(z) \right|,$$ where $\delta S$ is the boundary of $S:=\{ z\in\mathbb{C}: \frac{1}{2}< \sigma\leq \frac{3}{4}, 0\leq t\leq 10 \}$. This implies
\begin{align*}
    \int_{0}^{10} \left| \frac{\zeta'}{\zeta}(\sigma+it) \right|^2 dt &\leq 10 \cdot 4.7^2 < 215.
\end{align*}
Therefore, for all $T\geq T_0$ and $1/2<\sigma\leq 3/4$ we have
\begin{align*}
    \int_0^T \left| \frac{\zeta'}{\zeta}(\sigma+it) \right|^2 dt \leq  \frac{A_4 T}{(\sigma-\frac{1}{2})^2},
\end{align*}
where
\begin{align}\label{A5}
    A_4 =  A_2^2\left( \frac{1}{2} + \frac{2A_1^2 c_3}{\nu^2}  +   \frac{A_3}{8}  +  \left( c_2 \nu^3 \log^3T_0   +  \frac{A_1^2/6000}{\nu^2 \log^2 T_0} \right) \frac{1}{T_0^{1-2\nu}}  +   \frac{c_1^2 e^{-2\alpha} /6000 }{\nu^2 T_0^{5\nu}\log^2 T_0}  \right) + \frac{215}{16T_0} ,
\end{align}
for any $\nu \in \left[ \log x_0/\log T_0, 1/2\right)$ and $T_0\geq \exp(\frac{3}{1-2\nu})$. The latter condition is needed to ensure the $T^{2\nu-1}\log^3T$ term is decreasing for all $T\geq T_0$. Optimising $\nu$ and $\alpha$ with $x_0=3$ and $T_0=10^3$, we can take $A_4 = 0.576$ with $\nu = 0.1591$ and $\alpha=37$. This constant $A_4$ approaches its limit relatively quickly as $T_0$ increases, so with $T_0=10^8$ we can take $A_4 = 0.535$ with $\nu = 0.0597$ and $\alpha=26$. \end{proof}

For any choice of $T_0\geq 10^8$, $A_4$ is within $10^{-6}$ of its limit. Larger $x_0$ also does not reduce $A_4$. In fact, we see the opposite. A smaller $x_0$ allows smaller admissible $\nu$, which reduces the terms with a factor of $T^{\nu}$. As a result, the optimal value of $\nu$ in both cases is its lower limit. One of the most direct ways to reduce $A_4$ would be the use of a smaller upper bound on $\sigma$.

\subsection{Proof of Theorem \ref{mean-value-thm}}

To begin, let $$\theta_0(x) = \frac{1}{2} \lim_{\varepsilon\rightarrow 0}\left( \theta(x+\varepsilon)+\theta(x-\varepsilon) \right),$$ so $\theta_0(x) = \theta(x)$ except when $x$ is prime, and let $G(y,\delta) = \theta(y+\delta y)- \theta(y) - \delta y$ for any $\delta\in (0,1]$. By Perron's formula, we can write, for $x>1$, $s=\sigma+it$, and prime $p$,
\begin{align}\label{perron}
    \theta_0(x) = \frac{1}{2\pi i} \int_{2-i\infty}^{2+i\infty} \frac{x^s}{s} \bigg\{ \sum_p \frac{\log p}{p^s} \bigg\} ds.
\end{align}
As the integral is over $\sigma \geq 2$, the sum can be re-written as $$\sum_p \frac{\log p}{p^s} = \sum_{n=2}^\infty \frac{\Lambda(n)}{n^s} - \sum_{r=2}^\infty \sum_p \frac{\log p}{p^{rs}} = -\frac{\zeta'(s)}{\zeta(s)} - g(s).$$ As $g(s)$ is convergent for $\sigma>1/2$, it can be bounded over this region with
\begin{align*}
|g(s)| = \left| \sum_p \frac{\log p}{p^s(p^s-1)} \right| \leq \sum_p \frac{\log p}{p^\sigma(p^\sigma-1)} &\leq c_4 \sum_{p\geq 19} \frac{\log p}{p^{2\sigma}} +  \sum_{2\leq p< 19} \frac{\log p}{p^\sigma(p^\sigma-1)}  \\
&\leq c_4\sum_{n=2}^\infty \frac{\Lambda(n)}{n^{2\sigma}} + \sum_{2\leq p<19} \left( \frac{\log p}{p^\sigma(p^\sigma-1)} - \frac{c_4 \log p}{p^{2\sigma}} \right)
\end{align*}
where $c_4= \sqrt{19}/(\sqrt{19}-1) \approx 1.2978$. This simplifies to
\begin{align*}
    |g(s)| < - c_4\frac{\zeta'(2\sigma)}{\zeta(2\sigma)} + 1.4255.
\end{align*}
By Delange's theorem \cite{Delange_87}, and the second display equation on p. 334 of \cite{Delange_87}, we have 
\begin{equation*}
    - \frac{\zeta'(\sigma)}{\zeta(\sigma)} < \frac{1}{\sigma - 1}
\end{equation*}
for all $\sigma>1$. Hence, for $\sigma\in (1/2, 3/4]$ and $c_5 = 1.0053$ we can use
\begin{equation} \label{g}
    |g(s)| < \frac{c_4}{2\sigma-1} + 1.4255 < \frac{c_5}{\sigma - \frac{1}{2}}.
\end{equation}

Returning to (\ref{perron}), we will move the line of integration to some $\sigma\in (1/2,3/4]$. Part of this process involves evaluating a closed contour integral of the integrand in (\ref{perron}) over at most $1/2 < \text{Re}(s) \leq 2$ and all $t$. The only pole of the integrand in this region is at $s=1$. Therefore, by Cauchy's residue theorem,
\begin{align}\label{contoured}
    \theta_0(x) - x = - \frac{1}{2\pi} \int_{-\infty}^{\infty} \frac{x^{s}}{s} \bigg\{ \frac{\zeta'(s)}{\zeta(s)} + g(s) \bigg\} dt.
\end{align}
We will use (\ref{contoured}) to set up an expression for the error term of $\theta(x)$ in intervals. Let $\kappa$ be defined such that $e^\kappa = 1+\delta$, meaning $0< \kappa \leq \log 2$. For $\tau>0$, (\ref{contoured}) implies
\begin{equation*}
    \frac{\theta_0(e^{\kappa+\tau})-\theta_0(e^\tau)-\delta e^\tau}{e^{\sigma\tau}} = - \frac{1}{2\pi} \int_{-\infty}^{\infty} \frac{e^{\kappa s}-1}{s} e^{it\tau} \left( \frac{\zeta'}{\zeta}(s) + g(s) \right) dt.
\end{equation*}
By Plancherel's theorem\footnote{Selberg actually states that this step is justified by Parseval's theorem, but other sources refer to this theorem as Plancherel's.} (see e.g. \cite[Thm.~2, p.69]{Wiener_88})
\begin{align}\label{Plant}
    \int_{-\infty}^\infty \left| \frac{\theta_0(e^{\kappa+\tau})-\theta_0(e^\tau)-\delta e^\tau}{e^{\sigma\tau}} \right|^2 d\tau = \frac{1}{2\pi} \int_{-\infty}^\infty \left| \frac{e^{\kappa s}-1}{s} \right|^2 \left| \frac{\zeta'}{\zeta}(s) + g(s) \right|^2  dt.
\end{align}
Since $\theta_0(x) = \theta(x)$ almost everywhere, this statement is equally true for $\theta(x)$. As $|z|^2= z \overline{z}$ for any complex $z$, the integrals in (\ref{Plant}) are symmetric around 0, which implies
\begin{align}\label{post-plant}
    \int_0^\infty \left| \frac{G(e^{\tau}, \delta)}{e^{\sigma\tau}} \right|^2 d\tau &< \frac{1}{\pi} \int_{0}^\infty \left| \frac{e^{\kappa s}-1}{s} \right|^2 \left( \left| \frac{\zeta'(s)}{\zeta(s)} \right|^2 + \left| g(s) \right|^2 \right)  dt \nonumber \\ 
    &= \frac{1}{\pi} \sum_{k=0}^\infty \int_{(2^k-1)/\delta}^{(2^{k+1}-1)/\delta} \left| \frac{e^{\kappa s}-1}{s} \right|^2 \left( \left| \frac{\zeta'(s)}{\zeta(s)} \right|^2 + \left| g(s) \right|^2 \right)  dt.
\end{align}
The first factor in the integrand can be bounded in two different ways. The first uses the Taylor series for $e^x$, and is valid for all $\sigma\leq 3/4$,
\begin{align}\label{kap_bound}
    \left| \frac{e^{\kappa s}-1}{s} \right| = \left| \frac{1}{s} \sum_{n=1}^\infty \frac{(\kappa s)^n}{n!} \right| = \left| \sum_{n=0}^\infty \frac{\kappa^{n+1}s^n}{(n+1)!} \right| \leq \kappa \left| \sum_{n=0}^\infty \frac{(\kappa s)^n}{n!} \right| \leq e^{\frac{3}{4}\kappa}\kappa.
\end{align}
The second is more direct, and valid for all $\sigma\in(1/2,3/4]$,
\begin{align} \label{t_bound}
    \left| \frac{e^{\kappa s}-1}{s} \right| \leq  \frac{e^{\kappa \sigma} + 1}{|s|} \leq \frac{e^{\frac{3}{4}\kappa} + 1}{t}.
\end{align}
We will use (\ref{kap_bound}) for the $k=0$ term in (\ref{post-plant}), and (\ref{t_bound}) for the other terms, so as to have a convergent sum. Incorporating Lemma \ref{Lem4}, we have
\begin{align} \label{final-unsimplified}
    \int_0^\infty \left| \frac{G(e^{\tau}, \delta)}{e^{\sigma\tau}} \right|^2 d\tau &< \frac{(A_4+c_5^2)e^{\frac{3}{2}\kappa}\kappa^2}{\pi\delta(\sigma-\frac{1}{2})^2}  +  \frac{(e^{\frac{3}{4}\kappa}+1)^2\delta}{\pi (\sigma-\frac{1}{2})^2} \sum_{k=1}^\infty \frac{A_4(2^{k+1}-1) + c_5^2\cdot 2^k}{(2^k-1)^2} \\ \nonumber
    &\leq \left( (1+10^{-3}) (A_4+c_5^2) + (4+10^{-2})(A_4 A_5 + c_5^2 A_6) \right) \frac{\delta}{\pi (\sigma-\frac{1}{2})^2}
\end{align}
for $\delta\leq T_0^{-1}\leq 10^{-3}$, where $A_5 = 4.35..$ and $A_6 = 2.74..$ are the two convergent sums in (\ref{final-unsimplified}). In using Lemma \ref{Lem4} we assumed $(2^{k+1}-1)/\delta \geq T_0$, which is true over $k\geq 0$ with $\delta \leq T_0^{-1}$.

We can now choose $\sigma$ to minimise the final bound. By the upper bound on $\sigma$ in Lemma \ref{Lem4}, we can take $\sigma = \frac{1}{2} + \frac{\alpha}{\log(1/\delta)}$ for $\delta \leq \text{min}(e^{-4\alpha},T_0^{-1})$ and $\alpha$ defined as in Lemma \ref{Lem4}. Also, to simplify the integral of interest, let $y=e^\tau$, so for $y>1$ we have
\begin{align*}
    \int_0^\infty \left| \frac{G(e^{\tau}, \delta)}{e^{\sigma\tau}} \right|^2 d\tau = \int_1^\infty \left| \frac{G(y, \delta)}{y^{\sigma+\frac{1}{2}}} \right|^2 dy = \int_1^\infty \left| \frac{G(y, \delta)}{y^{1+\frac{\alpha}{\log(1/\delta)}}} \right|^2 dy.
\end{align*}
We can now use the bound in (\ref{final-unsimplified}) for a version of the above integral over a finite range of $y$. For applications, it is useful if the range of integration is a function of $\delta$. The importance of $\delta$ here is because $G(y,\delta)$ is the error in the PNT over an interval defined by $\delta$. Let $b$ be a positive parameter to write
\begin{align*}
    \int_1^\infty \left| \frac{G(y, \delta)}{y^{1+\frac{\alpha}{\log(1/\delta)}}} \right|^2 dy &> \int_1^{\delta^{-b}} y^{\frac{2\alpha}{\log \delta}} \left| \frac{G(y, \delta)}{y} \right|^2 dy \\
    &> \delta^{\frac{-2\alpha b}{\log \delta}} \int_1^{\delta^{-b}} \left| \frac{G(y, \delta)}{y} \right|^2 dy = \frac{1}{e^{2\alpha b}} \int_1^{\delta^{-b}} \left| \frac{G(y, \delta)}{y} \right|^2 dy.
\end{align*}
Hence by (\ref{final-unsimplified}) we can conclude
\begin{align}\label{delta_ver}
    \int_1^{\delta^{-b}} \left| \frac{G(y, \delta)}{y} \right|^2 dy < e^{2\alpha(b-1)} A_7 \delta\log^2(1/\delta)
\end{align}
for $$A_7 = \frac{e^{2\alpha}}{\alpha^2 \pi} \left( (1+10^{-3}) (A_4+c_5^2) + (4+10^{-2})(A_4 A_5 + c_5^2 A_6) \right).$$ This is an explicit form of (13) in \cite{Selberg_43}. This result is more precise than Theorem \ref{mean-value-thm} for fixed $\delta$, but it is not as simple to use. Theorem \ref{mean-value-thm} comes from a slightly different choice of $\sigma$: we instead take $\sigma = \frac{1}{2} + \frac{\alpha}{\log x}$ in (\ref{final-unsimplified}) for $x\geq \max\{ e^{4\alpha}, T_0\}$ (by the bounds on $\sigma$ from Lemma \ref{Lem4}). By the same steps as above, we reach 
\begin{align*}
    \int_0^\infty \left| \frac{G(e^{\tau}, \delta)}{e^{\sigma\tau}} \right|^2 d\tau = \int_1^\infty \left| \frac{G(y, \delta)}{y^{1+\frac{\alpha}{\log x}}} \right|^2 dy > \frac{1}{e^{2\alpha}x^2} \int_1^{x} \left| G(y, \delta) \right|^2 dy,
\end{align*}
and hence
\begin{align}\label{final-mean-value}
    \int_1^{x} \left| G(y, \delta) \right|^2 dy < A_7 \delta x^2 \log^2 x
\end{align}
for $\delta \leq T_0^{-1}$. Using $T_0=10^8$ in (\ref{A5}), and optimising over $\alpha$ and $\nu$, we can take $A_7 = 202$ with $\alpha=2.08$ and $\nu = 0.285$, such that (\ref{final-mean-value}) holds for all $x\geq 10^8$.

\section{Primes in some short intervals}\label{proof-corollary}

Theorem \ref{mean-value-thm} can be used to find an explicit estimate for the exceptional set of primes in short intervals of length $h$ where $h/\log y \rightarrow \infty$. This is demonstrated in Corollary \ref{some-intervals}.
\begin{manualCor}{1}
    Assuming RH, the set of $y\in [x,2x]$ for which there is at least one prime in $(y, y+ 32277 \log^2 y]$ has a measure of at least $x/2$ for all $x\geq 2$.
\end{manualCor}

\begin{proof}
As before, let $G(y,\delta) = \theta(y+\delta y)- \theta(y) - \delta y$ for $\delta\in (0,1]$. Also let $\delta_1 = \frac{\lambda\log^2(2x)}{2x}$ for $x>1$ and $\lambda\geq 1$. We will use the alternative version of Theorem \ref{mean-value-thm} from the previous section, stated in (\ref{delta_ver}), which is under RH. Taking $\delta = \delta_1$ in (\ref{delta_ver}) gives
\begin{align}\label{delta_ver_interval}
\int_1^{\delta_1^{-b}} \left| \frac{G(y, \delta)}{y} \right|^2 dy < e^{2\alpha(b-1)} A_7 \frac{\lambda\log^2(2x)}{2x} \log^2\left(\frac{2x}{\lambda\log^2(2x)}\right)
\end{align}
for any $b>0$ and all $x$ for which $\delta_1\leq \min\{e^{-4\alpha},T_0^{-1}\}$. The constants $T_0$ and $\alpha$ are determined by Lemma \ref{Lem4}, and correspond to the $A_4$ in the definition of $A_7$. Choosing $b>1$ will make $\delta_1 < (2x)^{-\frac{1}{b}}$ for sufficiently large $x$, allows us to write
\begin{equation*}
    \int_1^{\delta_1^{-b}} \left| \frac{G(y, \delta)}{y} \right|^2 dy > \int_{x}^{2x} \left| \frac{G(y, \delta)}{y} \right|^2 dy.
\end{equation*}
Imposing this restriction on $b$ thus implies
\begin{align*}
    \int_{x}^{2x} \left| G(y, \delta) \right|^2 dy &< 2 e^{2\alpha(b-1)} A_7 \lambda x \log^2(2x) \log^2x  <  2 e^{2\alpha(b-1)} A_7 \lambda x \log^4x
\end{align*}
over $\delta_1<\text{min}\{e^{-4\alpha}, T_0^{-1}, (2x)^{-\frac{1}{b}} \}$ and $x\geq 3$.

We can use this bound to prove that for a subset of $y\in[x, 2x]$ of measure $\geq (1-g)x$, with $g\in(0,1)$ and sufficiently large $x$, we have
\begin{equation}\label{almost-all}
    \left| G(y, \delta) \right|^2 < B \log^4 y
\end{equation}
for some $B>0$. To justify this, suppose for a contradiction that there exists a subset $I$ of $y\in[x, 2x]$ of measure $\geq gx$ for which $$\left| G(y, \delta) \right|^2 \geq B \log^4 y.$$ This would imply
\begin{align*}
    \int_{x}^{2x} \left| G(y, \delta) \right|^2 dy &\geq  B \log^4x \int_I dy = Bg x \log^4x.
\end{align*}
This will be a contradiction for $B\geq 2 e^{2\alpha(b-1)} A_7 \lambda/g$. Therefore, choosing the smallest possible $B$, we have for $x \leq y\leq 2x$,
\begin{align*}
    \theta(y+ \lambda\log^2 y) - \theta(y)  &\geq  \theta\left( y +  \frac{\lambda\log^2(2x)}{2x}y \right) - \theta(y) - \frac{\lambda\log^2(2x)}{2x}y + \frac{\lambda\log^2(2x)}{2x}y \\
    &> \left(- \sqrt{B} + \frac{\lambda}{2} \right)\log^2 y,
\end{align*}
which implies that there will be at least one prime in the interval $(y, y+\lambda\log^2 y]$ for $$-\sqrt{\frac{2 e^{2\alpha(b-1)} A_7 \lambda}{g}} + \frac{\lambda}{2} > 0.$$ We can conclude that, under the assumption of RH, the set of $y \in [x,2x]$ for which there are primes in $(y, y+ \lambda\log^2 y]$ has measure $\geq (1-g)x$ for $$\lambda > \frac{8 e^{2\alpha(b-1)} A_7}{g}.$$ To prove Corollary \ref{some-intervals}, we take $g=1/2$ and optimise the lower bound on $\lambda$ over $b$. We aim to find the smallest $\lambda$ for which the condition on $\delta_1$ holds over $x\geq 4\cdot 10^{18}$, as the computations of Oliveira e Silva, Herzog, and Pardi \cite{O_H_P_14} can be used to verify Corollary \ref{some-intervals} for $x<4\cdot 10^{18}$. We can also re-optimise $\alpha$, and use a higher $T_0$ in Lemma 4 than used to reach Theorem \ref{mean-value-thm}, as it just needs to satisfy the condition on $\delta_1$.

For $T_0 = 1.3\cdot 10^{11}$ we find that we can take $A_7 = 236.72$ with $\alpha = 1.5295$, and with $b=1.700423$ we have $\lambda = 32277$. This was achieved using a partially manual optimisation process, in that the lower bound on $\lambda$ was first minimised over $\alpha$ using an in-built optimising function and some guess for $b$. The guess for $b$ was then adjusted until it satisfied $\delta_1<\text{min}\{e^{-4\alpha}, T_0^{-1}, (2x)^{-\frac{1}{b}} \}$, which required a guess of an upper bound for $\lambda$. After a valid solution set was found, it was refined computationally in Python.

The computations in Section 2.2 of \cite{O_H_P_14} confirm that the interval $(y, y+ 32277\log^2 y]$ contains a prime for all $2\leq y\leq 4\cdot 10^{18}$. More specifically, the calculations in Section 2.2.1 of \cite{O_H_P_14} show that there is a prime in $(y,y+2.09\log^2y]$ for all $2\leq y\leq 4\cdot 10^{18}$.
\end{proof}

The trade-off in this type of result is between the length of the interval, the size of the exceptional set, and the range for which the result holds. Corollary \ref{some-intervals} was built first on the asymptotic length of the interval, then the desired measure of the exceptional set, and lastly the constant in the interval, which was calculated based on the smallest $x$ for which we wanted the result to hold. An alternative would have been to first fix the constant in the interval, then calculate the measure of the exceptional set. It would also be possible to consider an asymptotically larger interval than $O(\log^2 y)$. In this case, the exceptional set would be asymptotically smaller than $x$, and could be given explicitly using Theorem \ref{mean-value-thm} and the working in \cite[p.~11]{Selberg_43}.

\section{An explicit bound for Goldbach numbers}\label{proof-goldbach-intervals}

A Goldbach number is an even positive integer that can be written as the sum of two odd primes. With Theorem \ref{mean-value-thm} we can prove Theorem \ref{thm-Goldbach-intervals}, restated here.
\begin{manualThm}{2}
Assuming RH, there exists a Goldbach number in the interval $(x,x+ 9696 \log^2 x]$ for all $x\geq 2$.
\end{manualThm}

\noindent By Theorem \ref{mean-value-thm} and RH, we can state that for $x \geq 10^8$, $\delta\in (0,10^{-8}]$, and any $a\in [10^{-8}, 1)$, 
\begin{equation}\label{Goldbach-mean-value}
\int_{ax}^{x} | \theta(t+\delta t) - \theta(t) - \delta t|^2 dt \leq 202 \delta x^2 \log^2 x.    
\end{equation}
To prove Theorem \ref{thm-Goldbach-intervals} we largely follow the proof of Montgomery and Vaughan's Theorem 2 in \cite[Sect.~9]{M_V_75}, and use the above bound. We also optimise a few choices in the proof.

Suppose the interval $(x,x+h]$ contains no sum of two primes for $1\leq h\leq x$. Then, for any $y$, at least one of the two intervals 
\begin{equation}\label{pairs}
    \left(y, y+\frac{1}{2} h \right], \left(x-y, x-y+\frac{1}{2}h \right]
\end{equation}
will not contain a prime number. Both of these intervals can be represented by 
\begin{equation*}
\left(\frac{x}{2}+\frac{kh}{2}, \frac{x}{2} + \frac{(k+1) h}{2} \right],
\end{equation*}
and for any choice of $k\in K = \left[(2a-1) xh^{-1}  + 1, (1-2a)xh^{-1} - 1\right]$ with $a\in(0,1/2]$, which defines one of the intervals in (\ref{pairs}), there exists another $k\in K$ which defines the other interval such that both intervals lie in $(ax,x-ax]$. Each of these pairs of intervals lies symmetrically around the midpoint of $(ax,x-ax]$, so it is possible to completely cover $(ax,x-ax]$ with at most $(1-2a)x/h$ pairs of the form (\ref{pairs}). Therefore, at least $(1-2a)x/h$ of these intervals covering $(ax,x-ax]$ do not contain a prime.

As these intervals span $h/2$, we can write that for $\delta = \delta(x)\leq h/2x$ $$| \theta(t+\delta t) - \theta(t) - \delta t| = \delta t$$ on a set $I$ of $t\in (ax,x-ax]$ of measure $(\frac{1}{2} - a)x$. Note that the condition on $\delta$ is a result of requiring $\delta t\leq h/2$ for all $t$. Therefore, we have
\begin{align*}
\int_{ax}^{x} | \theta(t+\delta t) - \theta(t) - \delta t|^2 dt &> \int_I | \theta(t+\delta t) - \theta(t) - \delta t|^2 dt = \delta^2 \int_I t^2 dt \\
&> \delta^2\int_{ax}^{x/2} t^2 dt = \frac{\delta^2 x^3}{3} \left( \frac{1}{8} - a^3 \right).
\end{align*}
Taking $\delta = h/2x$ with $h=C\log^2 x$, this bound contradicts (\ref{Goldbach-mean-value}) for $$C\geq \frac{6\cdot 202}{\frac{1}{8}-a^3}$$ and all $x$ satisfying $$\frac{\log^2 x}{x} \leq \frac{2}{10^{8}C}.$$ The lower bound on $C$ is minimised at the smallest $a=10^{-8}$, meaning we can take $C=9696$. Therefore, there must exist at least one Goldbach number in $(x,x+ 9696\log^2 x]$ for all $x\geq 6\cdot 10^{14}$. The computation in \cite{O_H_P_14} confirms the Goldbach conjecture up to $4\cdot 10^{18}$, so this interval must also contain a Goldbach number for all $ x\geq 2$.

\section*{Acknowledgements}
Many thanks to Tim Trudgian, Aleks Simoni\v{c}, and Daniel Johnston for their advice and suggestions. Thanks also to Nicol Leong for fixing the code.

%%%%%%%%%%%%%%%%%%%%%%%%%%%%
% References

\bibliographystyle{IEEEtranSN}
\bibliography{references_LR}

\end{document}